\numberwithin{equation}{section}
\newtheorem{theorem}{Theorem}[section]
\newtheorem{definition}[theorem]{Definition}
\newtheorem{lemma}[theorem]{Lemma}
\begin{document}
\baselineskip=16pt

\title{Distance signless Laplacian spectral radius and perfect matching in graphs and bipartite graphs}

\author{\textbf{Chang Liu, Jianping Li\textsuperscript{\thanks{Corresponding author.}} }\\
	\small \emph{College of Liberal Arts and Sciences, National University of Defense Technology,}\\[-0.8ex]
	\small \emph{Changsha, Hunan, 410073, P. R. China}\\[-0.4ex]
		\small\tt clnudt19@126.com\\
		\small\tt lijianping65@nudt.edu.cn\\
	}

\date{\today}

\maketitle

\begin{abstract}
 The distance matrix $\mathcal{D}$ of a connected graph $G$ is the matrix indexed by the vertices of $G$ which entry $\mathcal{D}_{i,j}$ equals the distance between the vertices $v_i$ and $v_j$. The distance signless Laplacian matrix $\mathcal{Q}(G)$ of graph $G$ is defined as $\mathcal{Q}(G)=Diag(Tr)+\mathcal{D}(G)$, where $Diag(Tr)$ is the diagonal matrix of the vertex transmissions in $G$. The largest eigenvalue of $\mathcal{Q}(G)$ is called the distance signless Laplacian spectral radius of $G$, written as $\eta_1(G)$.  And a perfect matching in a graph is a set of disadjacent edges covering every vertex of $G$. In this paper, we present two suffcient conditions in terms of the distance signless Laplacian sepectral radius for the exsitence of perfect matchings in graphs and bipatite graphs. 
\\[2pt]
\textbf{Keywords:} Distance signless Laplacian spectral radius; perfect matching
\end{abstract}



\section{Introduction}

In this paper we deal only with simple, connected, undirected graphs. Let $G$ be a graph with vertex set $V(G)$ and $E(G)$. The order of $G$ is the number $n=|V(G)|$ of its vertices. The distance matrix of $G$, denoted by $\mathcal{D}(G)$, is a $n\times n$ matrix indexed by the vertices of $G$ which entry $\mathcal{D}_{i,j}$ is the distance between the vertices $v_i$ and $v_j$. The transmissions $Tr(v)$ of a vertex $v\in V(G)$ is the sum of distances from $v$ to all vertices in $G$. Analogous to the approach used for the adjacency matrix by defining a Laplacian and a signless Laplacian, in \cite{Aou}, Aouchiche and Hansen gave definitions of distance Laplacian and distance signless Laplacian. The distance signless Laplacian matrix of a graph $G$ is given by $\mathcal{Q}(G)=Diag(Tr)+\mathcal{D}(G)$, where $Diag(Tr)$ is the diagonal matrix of the vertex transmissions in $G$. Let $\eta_1(G),\eta_2(G),\cdots,\eta_n(G)$ be eigenvalues of $\mathcal{Q}(G)$ in nondecreasing order. The largest eigenvalue $\eta_1(G)$ is also called the distance signless Laplacian spectral radius of graph $G$.

A matching in a graph $G$ is a subset of $E(G)$ such that any two edges in matchings has no commont vertices, and a perfect matching is a matching covering all vertices of $G$. For the existence of perfect matching in arbitrary graphs $G$, Tutte \cite{Tut} first provided a sufficient and necessary condition in terms of odd components.

\begin{theorem}[Tutte's 1-factor Theorem \cite{Tut}]\label{tutte}
	A graph $G$ contains a perfect matching if and only if for each subsets $S\subseteq V(G)$, $o(G-S)\leq|S|$, where $o(G-S)$ is the number of odd components in graph $G-S$.
\end{theorem}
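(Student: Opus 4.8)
The plan is to prove the two implications separately, with the reverse (sufficiency) direction carrying essentially all the difficulty. For \emph{necessity}, suppose $G$ has a perfect matching $M$ and fix $S \subseteq V(G)$. Each odd component $C$ of $G-S$ has an odd number of vertices, so $M$ cannot match all of $C$ internally; hence at least one vertex of $C$ is matched by $M$ to a vertex outside $C$, and since $C$ is a component of $G-S$ that partner must lie in $S$. Because $M$ is a matching, the edges leaving distinct odd components hit distinct vertices of $S$, which yields $o(G-S) \le |S|$ immediately.

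For \emph{sufficiency}, assume the condition $o(G-S)\le|S|$ holds for every $S$. Taking $S=\emptyset$ forces $o(G)=0$, so $G$ has no odd component and $n=|V(G)|$ is even; more generally, counting the vertices of $G-S$ modulo $2$ gives the parity relation $o(G-S)\equiv|S|\pmod 2$. I would argue by contradiction through an \emph{edge-maximal} counterexample: among all graphs on the vertex set $V(G)$ that contain $G$, still satisfy the Tutte condition, yet have no perfect matching, choose one, $G^{*}$, with the maximum number of edges. This is legitimate because adding an edge can only merge components and hence never increases $o(G-S)$, so the Tutte condition is preserved under edge additions, while the complete graph $K_n$ (with $n$ even) does have a perfect matching; thus $G^{*}$ exists, is not complete, and by maximality the addition of any missing edge creates a perfect matching.

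The heart of the argument is the following structural claim: letting $U$ denote the set of vertices of $G^{*}$ adjacent to every other vertex, \emph{every component of $G^{*}-U$ is a complete graph}. Granting this, a perfect matching of $G^{*}$ can be assembled directly and a contradiction reached: since $o(G^{*}-U)\le|U|$, match one vertex of each odd component to a distinct vertex of $U$; each odd component then has even order and, being complete, has a perfect matching on its remaining vertices, as does each even component; finally the $|U|-o(G^{*}-U)$ unused vertices of $U$ are even in number by the parity relation and are mutually adjacent, so they too can be matched among themselves. This perfect matching of $G^{*}$ contradicts the choice of $G^{*}$.

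It remains to prove the claim, and this is where I expect the real work to be. Arguing by contradiction, if some component of $G^{*}-U$ is not complete then it contains vertices with $x\sim y$, $y\sim z$, but $x\not\sim z$; moreover $y\notin U$ supplies a vertex $w$ with $y\not\sim w$. Edge-maximality gives perfect matchings $M_1\supseteq\{xz\}$ of $G^{*}+xz$ and $M_2\supseteq\{yw\}$ of $G^{*}+yw$. The symmetric difference $M_1\triangle M_2$ is a disjoint union of even cycles alternating between the two matchings, and it contains both $xz$ and $yw$. When these two edges lie on different cycles, swapping $M_2$ for $M_1$ along the cycle through $yw$ produces a perfect matching of $G^{*}$ using neither $xz$ nor $yw$, a contradiction. \textbf{The main obstacle is the remaining case}, where $xz$ and $yw$ lie on a common alternating cycle $C$: here one must carefully splice the genuine edges $xy$ and $yz$ of $G^{*}$ into $C$, rerouting along the appropriate arc from $y$ to whichever of $x,z$ is reached first, so as to cover all of $V(C)$ while discarding both added edges. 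Verifying that this rerouting indeed yields a perfect matching of $G^{*}$ is the delicate step that completes the contradiction and hence the proof.
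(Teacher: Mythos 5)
The paper does not actually prove this statement: Tutte's theorem is quoted with a citation to Tutte's 1947 paper and used as a black box in the proof of Theorem 1.3, so there is no in-paper argument to compare against. Your proposal is the standard edge-maximal-counterexample proof (Lov\'asz's argument, as in Diestel), and every step you state is correct: necessity via counting matching edges leaving odd components, the parity relation $o(G-S)\equiv|S|\pmod 2$, the legitimacy of passing to an edge-maximal $G^{*}$, the assembly of a perfect matching of $G^{*}$ from the structural claim about $U$, and Case~1 of the claim via the symmetric difference $M_1\triangle M_2$. The only piece you leave unverified is Case~2, and your sketched rerouting does work; here is the bookkeeping. Traverse $C$ from $y$ along $yw$ and suppose $x$ is reached before $z$ (otherwise exchange the roles of $x$ and $z$ --- both $xy$ and $yz$ are genuine edges of $G^{*}$, so only one of the two is ever needed, not both as your phrasing suggests). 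Let $P$ be the arc of $C$ from $y$ through $w$ to $x$. Its first edge $yw$ lies in $M_2$, and its last edge, namely the $C$-edge at $x$ other than $xz$, also lies in $M_2$ because the $M_1$-edge of $C$ at $x$ is $xz$ and $z\notin V(P)$; hence $P$ alternates $M_2,M_1,\dots,M_2$ and its $M_1$-edges match exactly the internal vertices of $P$. Adjoining the edge $xy$ then matches all of $V(P)$. The complementary arc of $C$ begins with $xz\in M_1$ and ends with the $M_1$-edge of $C$ at $y$, so its $M_2$-edges match exactly $V(C)\setminus V(P)$. These edges, together with $M_2$ restricted to $V(G^{*})\setminus V(C)$, form a perfect matching of $G^{*}$ that uses neither $xz$ nor $yw$, giving the contradiction that completes your proof.
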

  Moreover, for bipartite graphs $G=(X,Y)$, where $X$ and $Y$ are two vertex sets such that $X\cup Y =V(G)$, the essential and sufficient condition for the existence of a matching which cover one partite of $G$ was first given by Hall \cite{Hall}. 
  
 \begin{theorem}[Hall's theorem \cite{Hall}]\label{Hall}
 	A bipartite graph $G=(X,Y)$ has a matching which covers every vertex in $X$ if and only if for each subsets $S\subseteq X$, $|N(S)|\geq|S|$, where $N(S)$ is the set of all neighbours of the vertices in $S$.
 \end{theorem}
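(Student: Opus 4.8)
The plan is to prove the two implications separately, with essentially all of the work concentrated in the sufficiency direction. For necessity (the ``only if''), I would suppose that $G$ admits a matching $M$ saturating every vertex of $X$. Then for any $S\subseteq X$ the matching assigns to each $x\in S$ a distinct partner $M(x)\in Y$ with $M(x)\in N(x)\subseteq N(S)$; since $x\mapsto M(x)$ is injective on $S$, this immediately gives $|N(S)|\geq|S|$. This direction requires no real effort.

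For sufficiency I would argue by induction on $|X|$. The base case $|X|=1$ is trivial: applying Hall's condition to $S=X$ gives $|N(X)|\geq 1$, so the unique vertex of $X$ has a neighbour and can be matched. For the inductive step I would split into two cases according to whether the condition is ever tight. In Case~1, suppose every nonempty proper subset $S\subsetneq X$ satisfies the strict inequality $|N(S)|\geq|S|+1$. Then I pick an arbitrary $x\in X$ and a neighbour $y\in N(x)$ (which exists by Hall's condition on $\{x\}$), place the edge $xy$ in the matching, and delete both endpoints. In the reduced graph $G'=G-\{x,y\}$ with parts $X'=X\setminus\{x\}$ and $Y'=Y\setminus\{y\}$, every $S\subseteq X'$ still obeys $|N_{G'}(S)|\geq|N_G(S)|-1\geq|S|$, so Hall's condition persists; the inductive hypothesis yields a matching saturating $X'$, and adjoining $xy$ saturates $X$.

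In Case~2 some nonempty proper subset $S_0\subsetneq X$ is tight, i.e. $|N(S_0)|=|S_0|$. Here I would first apply the inductive hypothesis to the subgraph induced on $S_0\cup N(S_0)$: because the neighbours of any $T\subseteq S_0$ stay inside $N(S_0)$, Hall's condition is inherited, producing a matching $M_1$ that saturates $S_0$. I would then pass to the bipartite graph $G''$ obtained by deleting $S_0\cup N(S_0)$, with parts $X\setminus S_0$ and $Y\setminus N(S_0)$, and verify Hall's condition there. For $T\subseteq X\setminus S_0$ one has $N_{G''}(T)=N_G(T)\setminus N(S_0)$, so writing $N_G(S_0\cup T)=N(S_0)\cup N_{G''}(T)$ and using the tightness $|N(S_0)|=|S_0|$, the hypothesis $|N_G(S_0\cup T)|\geq|S_0\cup T|=|S_0|+|T|$ collapses to $|N_{G''}(T)|\geq|T|$. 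The inductive hypothesis then supplies a matching $M_2$ saturating $X\setminus S_0$ in $G''$, and since $M_1,M_2$ live on disjoint vertex sets, $M_1\cup M_2$ saturates $X$.

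The main obstacle, and the step I would carry out most carefully, is the verification in Case~2 that Hall's condition descends to the ``outer'' graph $G''$; this is precisely where the tightness $|N(S_0)|=|S_0|$ enters in an essential way, together with the bookkeeping that $N_{G''}(T)$ and $N(S_0)$ are disjoint so that cardinalities add. The remaining steps (the necessity direction, the base case, and Case~1) are routine. I note in passing that one could alternatively route the proof through an augmenting-path argument or deduce it from Tutte's Theorem~\ref{tutte}, but the induction on $|X|$ above is the most self-contained.
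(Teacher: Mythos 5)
Your argument is correct: it is the standard Halmos--Vaughan induction on $|X|$, with the necessity direction handled by injectivity of the matching, and the sufficiency split into the ``always strict'' case (remove a matched edge and note the condition survives) and the ``tight set'' case (where the disjointness of $N(S_0)$ and $N_{G''}(T)$, together with $|N(S_0)|=|S_0|$, makes Hall's condition descend to the outer graph). The key verification in Case~2 is carried out correctly. Note, however, that the paper does not prove this statement at all --- it is quoted as a classical theorem with a citation to Hall's 1936 article --- so there is no in-paper proof to compare against; your write-up is simply a correct self-contained proof of the cited result.
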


  Note that a connected bipartite graph $G=(X,Y)$ is balanced if $|X|=|Y|=n$. At this time, the matching mentioned in Theorem \ref{Hall} is a perfect matching. 
  
  Inspried by Tutte's 1-factor Theorem, researchers have tried a variety of ways to illustrate the existence of perfect matchings in graphs. Investigations on the relationship between eigenvalues and the perfect matching in graphs originated from the paper \cite{SO} published by O in 2020. He proved a lower bound for the adjancency spectral radius of $G$ to guarantee the existence of a perfect matching. Later, Liu et al. \cite{Liu} and Zhao et al. \cite{Zhao} successively obtained some sufficient conditions for the exsitence of perfect matchings in graph $G$, based on the signless Laplacian spectral radius and $A_{\alpha}$-spectral radius, respectively. 
  
  Vary recently, Zhang and Lin \cite{zhang} have gotten two upper bounds on distance spectral radius to ensure the existence of a perfect matching in graphs and balanced bipartite graphs, respectively. Along this line, we intend to generalize these interesting results by considering the distance signless Laplacian spectral radius. The main results in this paper are shown as below.

\begin{theorem}\label{them1}
	Let $G$ be a graph with $n$ vertex, where $n\geq4$ is an even number. The largest root of the equation  $x^3+(5-5n)x^2+(8n^2-25n+32)x-4n^3+22n^2-54n+48$ is denoted by $\theta(n)$. Then the graph $G$ contains a perfect matching if
 	\begin{equation*}
 		\eta_1(G)<\begin{cases}
 			\theta(n), &n=4 \mbox{ or } n \geq 12, \\
 			2n+\sqrt{\frac{n(n+2)}{2}}-2, & n=6,8,10. 
 		\end{cases}
 	\end{equation*} 
\end{theorem}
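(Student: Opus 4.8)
The plan is to argue by contraposition: assuming $G$ has no perfect matching, I will show that $\eta_1(G)$ is at least the stated threshold. By Tutte's $1$-factor Theorem (Theorem~\ref{tutte}) there is a set $S\subseteq V(G)$ with $o(G-S)>|S|$; writing $s=|S|$ and using that $n$ is even (so $o(G-S)\equiv|S|\pmod 2$), I get $o(G-S)\ge s+2$, while connectivity of $G$ forces $s\ge 1$. The first reduction is a monotonicity observation: adding an edge to a connected graph does not increase any distance, hence does not increase any entry of $\mathcal{Q}(G)=Diag(Tr)+\mathcal{D}(G)$, and since $\mathcal{Q}$ is a nonnegative symmetric matrix, Perron--Frobenius gives that $\eta_1$ cannot increase. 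Consequently I may replace $G$ by the edge-maximal graph subject to $o(G-S)\ge s+2$: make $S$ a clique joined completely to $V(G)\setminus S$, make every component of $G-S$ a clique (edges between distinct components are forbidden, as they would lower $o(G-S)$), and—using convexity of $\binom{x}{2}$ to maximize the edge count among $s+2$ odd components of total order $n-s$—take one large clique together with $s+1$ singletons. This yields the candidate extremal graph $G_{n,s}=K_s\vee\bigl(K_{n-2s-1}\cup(s+1)K_1\bigr)$, and $\eta_1(G)\ge\eta_1(G_{n,s})\ge\min_{1\le s\le n/2-1}\eta_1(G_{n,s})$.

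The second step is to compute $\eta_1(G_{n,s})$. The partition of $V(G_{n,s})$ into its three natural classes (the $s$ vertices of $K_s$, the $n-2s-1$ vertices of the large clique, and the $s+1$ singletons) is equitable for $\mathcal{Q}$, so $\eta_1(G_{n,s})$ equals the largest eigenvalue of the $3\times3$ quotient matrix $B_s$, whose entries are read off from the pairwise distances (namely $1$ within a clique or across the join, and $2$ between the singleton class and the large clique as well as among the singletons) together with the three transmission values. For $s=1$ one has $G_{n,1}=K_1\vee(K_{n-3}\cup 2K_1)$, and expanding $\det(xI-B_1)$ reproduces exactly $x^3+(5-5n)x^2+(8n^2-25n+32)x-4n^3+22n^2-54n+48$, whose largest root is $\theta(n)$. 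At the other endpoint $s=n/2-1$ the large clique degenerates to $K_1$, the partition collapses to the two classes clique-versus-independent-set, and $G_{n,n/2-1}=K_{n/2-1}\vee\overline{K_{n/2+1}}$; here the quotient is $2\times2$ with trace $4n-4$ and discriminant $2n(n+2)$, so its largest eigenvalue is $2n+\sqrt{n(n+2)/2}-2$, matching the second branch of the bound.

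It remains to minimize $\eta_1(G_{n,s})$ over $s$, and I expect this to be the crux and the main obstacle, since the eigenvalues are largest roots of a one-parameter family of cubics whose ordering in $s$ is not transparent. The plan is to show that $\eta_1(G_{n,s})$, as a function of $s$, is minimized at one of the two endpoints $s=1$ or $s=n/2-1$—for instance by tracking the sign of $\det(xI-B_s)$ evaluated at the competing roots, or by bounding $\eta_1(G_{n,s})$ between the diagonal extremes of $B_s$ and invoking eigenvalue interlacing—and then to compare the two endpoint values directly. A final numerical comparison of $\theta(n)$ with $2n+\sqrt{n(n+2)/2}-2$ should reveal that the cubic root is the smaller of the two precisely when $n=4$ or $n\ge 12$, while the split-graph value wins for $n=6,8,10$, producing the stated case distinction. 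Substituting these minima back and using $\eta_1(G)\ge\min_s\eta_1(G_{n,s})$ then contradicts the hypothesis $\eta_1(G)<\theta(n)$ (respectively $\eta_1(G)<2n+\sqrt{n(n+2)/2}-2$), which completes the proof.
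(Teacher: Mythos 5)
Your overall strategy is the paper's: contrapositive via Tutte's theorem, pass to the edge-maximal configuration $K_s\vee(K_{n-2s-1}\cup(s+1)K_1)$, compute its $\eta_1$ from the equitable quotient matrix, and compare the endpoints $s=1$ and $s=n/2-1$. However, there are two genuine gaps, and they sit exactly at the two hardest steps.

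First, the reduction from $K_s\vee(K_{n_1}\cup\cdots\cup K_{n_q})$ to one large clique plus singletons is not justified by your edge-counting/convexity argument. Lemma \ref{largerrd} only compares graphs related by entrywise domination of $\mathcal{Q}$, i.e.\ essentially by spanning-subgraph inclusion; but two configurations $(n_1,\dots,n_q)$ and $(n-s-q+1,1,\dots,1)$ with the same $q$ are in general \emph{incomparable} (e.g.\ $K_3\cup K_3$ versus $K_5\cup K_1$: neither is a spanning subgraph of the other, since a component of size $\ge 2$ must map entirely into a single target clique). So ``more edges'' does not by itself imply smaller $\eta_1$ here. The paper proves $\eta_1(G'')\le\eta_1(G')$ by a Rayleigh-quotient computation with the Perron vector of $G''$, and making the resulting quadratic form nonnegative requires first establishing the auxiliary lower bound $\eta_1(G'')>2n-s-4+\frac{n-s-q}{\sqrt{(n-s-q+4)/2}-1}$ via a determinant sign analysis of the quotient matrix; this is the bulk of the proof and is entirely absent from your argument. (By contrast, your implicit reduction of the number of components from $q$ down to $s+2$ \emph{is} a legitimate edge addition and is fine.)

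Second, you explicitly defer the minimization of $\eta_1(G_{n,s})$ over $1\le s\le n/2-1$, which you correctly identify as a crux. The paper settles it by evaluating the characteristic polynomial $f$ of the general-$s$ quotient at $x=\theta(n)$, factoring out $(1-s)$, and showing the cofactor is positive for $s\ge 2$ and $n\ge 2s+4$ (using $\theta(n)>2n+s-2$), so that $f(\theta(n))<0$ and hence $\theta(n)<\eta_1(G_{n,s})$ for all interior $s$; the boundary case $n=2s+2$ collapses to $K_s\vee\overline{K_{s+2}}$ and gives the quadratic root $2n+\sqrt{n(n+2)/2}-2$, and a final numerical comparison produces the case split at $n=6,8,10$. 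Your endpoint computations and the expected shape of the answer are consistent with this, but without the interior-$s$ argument and the first reduction the proof is not complete.
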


\begin{theorem}\label{them2}
	Let $G$ be a balanced bipartite graph with $2n$ vertices where $n\geq3$ is an integer, then $G$ contains a perfect matching if $\eta_1(G)<\kappa(n)$, where $\kappa(n)$ is the largest root of the equation $x^4+(12-18n)x^3+(119n^2-190n+76)x^2-(342n^3-915n^2+826n-252)x+360n^4-1383n^3+2026n^2-1362n+364=0$.
\end{theorem}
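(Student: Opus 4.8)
The plan is to argue by contraposition: assume the balanced bipartite graph $G=(X,Y)$ with $|X|=|Y|=n$ has no perfect matching and deduce $\eta_1(G)\ge\kappa(n)$. By Hall's theorem (Theorem \ref{Hall}) there is a set $S\subseteq X$ with $|N(S)|\le|S|-1$; write $s=|S|$. Since $G$ is connected, a violator of size $1$ would isolate a vertex of $X$ and a violator of size $n$ would isolate a vertex of $Y$, so in fact $2\le s\le n-1$. The whole problem thus reduces to minimizing $\eta_1$ over all connected balanced bipartite graphs on $2n$ vertices admitting such a Hall violator, and $\kappa(n)$ should be exactly this minimum.

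First I would establish an edge-monotonicity principle. If $G\subseteq G'$ then every distance in $G'$ is at most the corresponding distance in $G$, so $\mathcal{D}(G')\le\mathcal{D}(G)$ and $Diag(Tr(G'))\le Diag(Tr(G))$ entrywise; hence $\mathcal{Q}(G')\le\mathcal{Q}(G)$ entrywise, and since $\mathcal{Q}$ is nonnegative and irreducible, Perron--Frobenius gives $\eta_1(G')\le\eta_1(G)$. Consequently the minimizer is \emph{edge-maximal} among graphs with no perfect matching. I would then show that every such edge-maximal graph is, up to isomorphism, the graph $G_s$ obtained by partitioning $X=S\cup S'$ and $Y=T\cup T'$ with $|S|=s$, $|T|=s-1$, joining $S$ completely to $T$ and $S'$ completely to all of $Y$: any no-matching $G$ embeds into such a $G_s$ after enlarging $N(S)$ to a set $T$ of size $s-1$ and completing all permissible edges. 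Thus $\eta_1(G)\ge\min_{2\le s\le n-1}\eta_1(G_s)$.

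The partition $\{S,S',T,T'\}$ is equitable for $\mathcal{D}(G_s)$: the within-class distances are $2$ on $S,S',T,T'$, the cross distances are $1$ for adjacent class-pairs and $2$ elsewhere, except that $S$ is at distance $3$ from $T'$; all of these depend only on the classes. Hence $\eta_1(G_s)$ equals the Perron root of an explicit $4\times4$ quotient matrix $B_s$ of $\mathcal{Q}(G_s)$ whose entries are linear in $n$ and $s$, and computing $\det(xI-B_s)$ yields a quartic whose largest root is $\eta_1(G_s)$. A leading-order check shows the four roots behave like $3n,4n,5n,6n$, consistent with $\eta_1(K_{n,n})=6n-4$ for the densest limit. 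It remains to minimize over $s$: the edge count $f(s)=s(s-1)+(n-s)n$ is a convex parabola maximized at the endpoints $s=2$ and $s=n-1$ (with $G_2\cong G_{n-1}$ via swapping $X$ and $Y$), so I expect $\eta_1(G_s)$ to be minimized at $s=2$; extracting the largest root of $\det(xI-B_2)$ then produces $\kappa(n)$.

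The main obstacle is precisely this minimization over $s$. Edge-maximality pins down each $G_s$, but the graphs for different $s$ are \emph{not} nested, so ``more edges $\Rightarrow$ smaller $\eta_1$'' cannot be invoked directly and must be replaced by a genuine spectral comparison. I would handle it by evaluating the quotient characteristic polynomial $\det(xI-B_s)$ at $x=\eta_1(G_2)$ and showing it is $\le 0$ for every $s\ge 3$ (equivalently, that $\eta_1(G_2)$ lies below the Perron root of each $B_s$), which reduces the claim to checking the sign of a polynomial in $n$ and $s$. The remaining work---carrying out the $4\times4$ determinant symbolically and verifying the small cases $n=3,4$ by hand---is routine but calculation-heavy.
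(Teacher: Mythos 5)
Your outline follows essentially the same route as the paper: contraposition via Hall's theorem, the entrywise domination of $\mathcal{Q}$ under edge addition to reduce to $K_{n,n}$ minus a complete bipartite block between a Hall violator $S$ and $Y\setminus T$ with $|T|=|S|-1$, an equitable $4\times4$ quotient matrix, and then comparison of the quartics by checking the sign of $\det(xI-B_s)$ at the candidate extremal root (your extremal graph $G_2$ is the paper's $\Gamma_{n-1,n-2}$ after swapping the roles of $X$ and $Y$, so the target polynomial agrees with $\kappa(n)$'s). Be aware, though, that the step you dismiss as ``routine but calculation-heavy'' --- verifying that the quartic for general $s$ is negative at $\kappa(n)$ --- is where the paper spends most of its effort (a factorization of $h(\kappa(n))$, a monotonicity argument in $s$, and a lower bound $\kappa(n)>6n$ from a Rayleigh quotient), so that computation is the actual substance of the proof rather than an afterthought.
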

\section{Preliminaries}
In this section, we introduce some lemmas about the largest eigenvalue of real matrices, which will be used in our paper later. The largest eigenvalue of the matrix $M$ is denoted by $\rho(M)$.

\begin{lemma}\label{symm}
	If $M$ is an $n\times n$ real symmetric matrix, then $\rho(M)=\max\limits_{\textbf{X}\in \mathbb{R}^n}\dfrac{\textbf{X}^tM\textbf{X}}{\textbf{X}^{t}\textbf{X}}$.
\end{lemma}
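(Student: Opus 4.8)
The plan is to derive this Rayleigh-quotient identity directly from the spectral theorem for real symmetric matrices. Since $M$ is real and symmetric, it admits an orthonormal basis $u_1,\dots,u_n$ of $\mathbb{R}^n$ consisting of eigenvectors of $M$, with corresponding real eigenvalues $\lambda_1\geq\lambda_2\geq\cdots\geq\lambda_n$; by definition $\rho(M)=\lambda_1$. First I would note that the displayed quotient is only meaningful for $\textbf{X}\neq\textbf{0}$, so throughout the maximum is understood to range over nonzero vectors.

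Next I would expand an arbitrary nonzero $\textbf{X}\in\mathbb{R}^n$ in this eigenbasis, writing $\textbf{X}=\sum_{i=1}^n c_i u_i$ with $c_i=u_i^t\textbf{X}$. Using the orthonormality relations $u_i^t u_j=\delta_{ij}$ together with $Mu_i=\lambda_i u_i$, one computes $\textbf{X}^t\textbf{X}=\sum_{i=1}^n c_i^2$ and $\textbf{X}^t M\textbf{X}=\sum_{i=1}^n \lambda_i c_i^2$. Hence the Rayleigh quotient equals $\left(\sum_{i=1}^n \lambda_i c_i^2\right)\big/\left(\sum_{i=1}^n c_i^2\right)$, which is a weighted average of the eigenvalues with nonnegative weights $c_i^2$, not all zero.

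From this I would establish both the upper bound and its attainment. Since $\lambda_i\leq\lambda_1$ for every $i$, we have $\sum_{i=1}^n \lambda_i c_i^2\leq\lambda_1\sum_{i=1}^n c_i^2$, so the quotient is at most $\lambda_1$ for all $\textbf{X}\neq\textbf{0}$. To see that this bound is achieved, and hence that the supremum is a genuine maximum, I would substitute $\textbf{X}=u_1$: then $c_1=1$ and $c_i=0$ for $i\neq 1$, giving quotient exactly $\lambda_1$. Combining the inequality with the witness $u_1$ yields $\max_{\textbf{X}}\textbf{X}^t M\textbf{X}/\textbf{X}^t\textbf{X}=\lambda_1=\rho(M)$.

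The only non-elementary input here is the spectral theorem, namely the orthogonal diagonalizability of a real symmetric matrix; once that is granted the remainder is a short computation rather than a genuine obstacle. The one point requiring mild care is the tacit exclusion of $\textbf{X}=\textbf{0}$ from the domain, which I would make explicit so that the quotient is well defined and the reduction to a weighted average is valid.
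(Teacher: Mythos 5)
Your proof is correct and complete: the expansion in an orthonormal eigenbasis, the weighted-average bound $\sum_i \lambda_i c_i^2 \leq \lambda_1 \sum_i c_i^2$, and the witness $\textbf{X}=u_1$ together give exactly the Rayleigh--Ritz characterization, and you rightly flag that the maximum must be taken over nonzero vectors (the paper's statement is silent on this). The paper itself gives no proof of this lemma --- it is invoked as a standard known fact --- so your argument is precisely the spectral-theorem justification the authors take for granted, and there is nothing in the paper for it to diverge from.
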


\begin{lemma}\cite{Cgod}\label{largerrd}
	Let $M_1$ and $M_2$ be real nonnegative $n\times n$ matrices such that $M_1-M_2$ is nonnegative, then $\rho(M_2)\leq\rho(M_1)$. 
\end{lemma}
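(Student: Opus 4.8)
The plan is to establish the monotonicity of the Perron root under the entrywise ordering of nonnegative matrices, via Perron--Frobenius theory. First I would fix the meaning of $\rho$ in this setting: for a nonnegative matrix $M$, the Perron--Frobenius theorem guarantees that the spectral radius of $M$ is itself an eigenvalue (the Perron root) admitting a nonnegative eigenvector, so it coincides with the ``largest eigenvalue'' referenced in the statement. The hypothesis that $M_1-M_2$ is nonnegative means $M_1\geq M_2$ entrywise with both matrices nonnegative, and the target inequality is $\rho(M_2)\leq\rho(M_1)$.

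The cleanest route avoids any irreducibility assumption by passing through matrix powers and Gelfand's formula. The key step is the entrywise inequality $M_1^k\geq M_2^k\geq 0$ for every $k\geq 1$, which I would prove by induction: assuming $M_1^k\geq M_2^k$, write $M_1^{k+1}=M_1M_1^k\geq M_2M_1^k\geq M_2M_2^k=M_2^{k+1}$, where the first inequality uses $M_1\geq M_2$ together with $M_1^k\geq 0$, and the second uses $M_2\geq 0$ together with $M_1^k\geq M_2^k$; nonnegativity is exactly what makes matrix multiplication preserve the ordering. Applying the monotone submultiplicative norm $\|A\|=\sum_{i,j}|a_{ij}|$, which satisfies $A\geq B\geq 0\Rightarrow\|A\|\geq\|B\|$, gives $\|M_1^k\|\geq\|M_2^k\|$, and Gelfand's formula $\rho(M)=\lim_{k\to\infty}\|M^k\|^{1/k}$ then yields $\rho(M_1)\geq\rho(M_2)$ on taking the limit.

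An alternative and perhaps more transparent route uses the Collatz--Wielandt characterization directly. Let $v\geq 0$ be a Perron eigenvector of $M_2$, so $M_2v=\rho(M_2)v$. Since $M_1\geq M_2$ entrywise and $v\geq 0$, we obtain $M_1v\geq M_2v=\rho(M_2)v$ componentwise. The Collatz--Wielandt formula for nonnegative matrices states $\rho(M_1)=\max_{x\geq 0,\,x\neq 0}\min_{i:\,x_i>0}\frac{(M_1x)_i}{x_i}$; evaluating the right-hand side at $x=v$ gives $\rho(M_1)\geq\min_{i:\,v_i>0}\frac{(M_1v)_i}{v_i}\geq\rho(M_2)$, as required.

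The main obstacle to watch for is the reducible case. A tempting one-line argument --- pair a left Perron eigenvector $u$ of $M_1$ with the right Perron eigenvector $v$ of $M_2$ and use $\rho(M_1)\,u^tv=u^tM_1v\geq u^tM_2v=\rho(M_2)\,u^tv$ --- breaks down precisely when $u^tv=0$, which can happen if the Perron eigenvectors have disjoint supports, as is possible for reducible matrices. Both routes above are designed to sidestep this: the Gelfand argument never mentions eigenvectors, and the Collatz--Wielandt argument needs only a single nonnegative eigenvector together with the variational upper bound, neither of which requires irreducibility. If one insisted on the eigenvector-pairing argument, the gap could instead be closed by perturbation: replace $M_i$ by $M_i+\varepsilon J$ (with $J$ the all-ones matrix), which are strictly positive and still satisfy $(M_1+\varepsilon J)-(M_2+\varepsilon J)=M_1-M_2\geq 0$, apply the positive-matrix case where $u^tv>0$ is automatic, and let $\varepsilon\to 0^+$ using continuity of the spectral radius in the matrix entries.
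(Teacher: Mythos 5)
Your proposal is correct. Note that the paper itself offers no proof of this lemma at all --- it is quoted verbatim with a citation to Godsil and Royle's \emph{Algebraic Graph Theory} --- so there is no internal argument to compare against; what you have written supplies a complete, self-contained proof of the cited fact. Both of your routes are sound: the induction $M_1^k\geq M_2^k\geq 0$ combined with the monotone entrywise norm and Gelfand's formula is airtight and entirely eigenvector-free, and the Collatz--Wielandt argument is also valid, since the Perron--Frobenius theorem does guarantee a nonnegative eigenvector for the spectral radius of a (possibly reducible) nonnegative matrix, and the Collatz--Wielandt max-min characterization holds without irreducibility. Your discussion of the pitfall in the naive pairing argument (left Perron vector of $M_1$ against right Perron vector of $M_2$, which fails when $u^tv=0$ in the reducible case) and the $\varepsilon J$-perturbation repair is exactly the right caution; textbook treatments often gloss over this point or quietly assume irreducibility, whereas the lemma as stated --- and as used in the paper, where the matrices compared are distance signless Laplacians of a graph and of a spanning subgraph-like modification --- genuinely needs the general nonnegative case. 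Either one of your two arguments alone would suffice; presenting the Gelfand route as primary is a good choice since it requires the least machinery.
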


Next, we explain the concepts of equitable matrices and equitable partitions.
\begin{definition}
\label{def1}\cite{aebrouw}
	Let $M$ be a symmetric real matrix of order $n$ whose rows and columns are indexed by $P=\left\lbrace1,2,\cdots,n \right\rbrace $. $\left\lbrace P_1,P_2,\cdots,P_t \right\rbrace $ is a partition of $P$ with $n_i=|P_i|$ and $n=n_1+n_2+\cdots+n_t$. Let the matrix $M$ be partitioned according to $\left\lbrace P_1,P_2,\cdots,P_t \right\rbrace$, that is
\begin{equation*}
	M=\begin{pmatrix}
		M_{1,1} & M_{1,2} & \cdots & M_{1,t}\\
		M_{2,1} & M_{2,2} & \cdots & M_{2,t}\\
		\vdots & \vdots & \ddots & \vdots\\
		M_{t,1} & M_{t,2} & \cdots & M_{t,t}
	\end{pmatrix}_{n\times n},
\end{equation*}
	where the blocks $M_{i,j}$ denotes the submatrix of $M$ formed by rows in $P_i$ and the $P_j$ columns. Let $b_{i,j}$ denote the average row sum of $M_{i,j}$. Then the matrix $B=(b_{i,j})$ is called the quotient matrix of $M$ w.r.t. the given partition. Particularly, if the row sum of each submatrix $M_{i,j}$ is constant then the partition is called equitable.
\end{definition}

\begin{lemma}\cite{lhYou}\label{equit}
	Let $B$ be an equitable quotient matrix of $M$ as defined in Definition \ref{def1}. If $M$ is a nonnegative matrix, then $\rho(B)=\rho(M)$.
\end{lemma}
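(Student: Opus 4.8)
The plan is to realise the two matrices on a common footing through the $n\times t$ characteristic matrix $S$ of the partition, whose $(u,i)$ entry is $1$ when $u\in P_i$ and $0$ otherwise. First I would record the two structural identities that equitability and symmetry buy us. Equitability says exactly that each block $M_{i,j}$ has constant row sum $b_{i,j}$, which is equivalent to the matrix identity $MS=SB$. Writing $D=\mathrm{diag}(n_1,\dots,n_t)$ we have $S^{t}S=D$, and a direct count gives $(S^{t}MS)_{i,j}=\sum_{u\in P_i}\sum_{w\in P_j}M_{u,w}=n_i b_{i,j}$, that is $S^{t}MS=DB$. Since $M$ is symmetric, $S^{t}MS$ is symmetric, so $DB=B^{t}D$; equivalently $n_i b_{i,j}=n_j b_{j,i}$. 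Note also that $B$ is nonnegative because $M$ is.

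Next I would prove $\rho(B)\le\rho(M)$ by lifting. Because $B$ is nonnegative, the Perron--Frobenius theorem guarantees that $\rho(B)$ is a genuine (real, nonnegative) eigenvalue of $B$; let $\mathbf v$ be an eigenvector, $B\mathbf v=\rho(B)\mathbf v$. Put $\mathbf x=S\mathbf v$. The columns of $S$ have disjoint supports, so $\mathbf x\neq 0$, and $M\mathbf x=MS\mathbf v=SB\mathbf v=\rho(B)S\mathbf v=\rho(B)\mathbf x$. Thus $\rho(B)$ is a real eigenvalue of the symmetric matrix $M$, and since $\rho(M)$ is its largest eigenvalue we get $\rho(B)\le\rho(M)$.

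The crux is the reverse inequality $\rho(M)\le\rho(B)$, for which I would project rather than lift. By Perron--Frobenius applied to the nonnegative symmetric matrix $M$, choose a nonnegative eigenvector $\mathbf x\ge 0$ with $M\mathbf x=\rho(M)\mathbf x$, and set $\mathbf v=D^{-1}S^{t}\mathbf x$, the vector of averages of $\mathbf x$ over the parts. Using $DB=B^{t}D$ in the form $BD^{-1}=D^{-1}B^{t}$, together with $B^{t}S^{t}=(SB)^{t}=(MS)^{t}=S^{t}M$, I would compute $B\mathbf v=BD^{-1}S^{t}\mathbf x=D^{-1}B^{t}S^{t}\mathbf x=D^{-1}S^{t}M\mathbf x=\rho(M)D^{-1}S^{t}\mathbf x=\rho(M)\mathbf v$. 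Here $\mathbf v\neq 0$ because $\mathbf x\ge 0$ is nonzero, so $S^{t}\mathbf x\ge 0$ is nonzero. Hence $\rho(M)$ is an eigenvalue of $B$, giving $\rho(M)\le\rho(B)$, and combining the two inequalities yields $\rho(B)=\rho(M)$.

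I expect the reverse inequality to be the main obstacle, and it is exactly where the two hypotheses of the lemma are used. The eigen-relation $B\mathbf v=\rho(M)\mathbf v$ for the averaged vector relies on the symmetric--equitable identity $n_i b_{i,j}=n_j b_{j,i}$ (equivalently $DB=B^{t}D$), which fails without symmetry of $M$; and the guarantee $\mathbf v\neq 0$ rests on being able to take the Perron eigenvector $\mathbf x$ nonnegative, which is where nonnegativity of $M$ enters. If $\mathbf x$ were sign-changing, the part-averages could cancel and collapse $\mathbf v$ to zero, so this positivity is essential rather than cosmetic.
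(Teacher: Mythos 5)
Your proof is correct, but there is no in-paper argument to compare it with: the paper states Lemma \ref{equit} as a quoted result from You, Yang, So and Xi \cite{lhYou} and gives no proof, so the relevant comparison is with that cited source. Both your argument and the standard one begin the same way, encoding equitability as $MS=SB$ for the $n\times t$ characteristic matrix $S$ and lifting an eigenvector of $B$ to conclude that every eigenvalue of $B$ is an eigenvalue of $M$, whence $\rho(B)\le\rho(M)$. The difference lies in the reverse inequality. The result in \cite{lhYou} is stated and proved for nonnegative matrices that need not be symmetric, by Perron--Frobenius-type arguments applied to $M$ itself (with extra care for the reducible case); your projection argument instead uses symmetry of $M$ in an essential way, through $S^tMS=DB$ and hence $DB=B^tD$, to show that the part-average vector $\mathbf v=D^{-1}S^t\mathbf x$ of a nonnegative Perron vector $\mathbf x$ of $M$ satisfies $B\mathbf v=\rho(M)\mathbf v$ with $\mathbf v\neq 0$. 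This is perfectly legitimate in the present setting, since Definition \ref{def1} builds symmetry into the very notion of quotient matrix, and the matrices to which the paper applies the lemma (the distance signless Laplacians $\mathcal{Q}(G'')$, $\mathcal{Q}(\Gamma_{s,s-1})$, etc.) are symmetric and nonnegative; it buys a short, completely self-contained proof, and your closing remark correctly identifies that nonnegativity is what prevents the part-averages from cancelling to $\mathbf v=0$. What your route gives up is the extra generality of \cite{lhYou}: it does not cover non-symmetric nonnegative $M$, where the identity $DB=B^tD$ (equivalently $n_ib_{i,j}=n_jb_{j,i}$) can fail, so the reverse inequality would need the genuinely different Perron--Frobenius argument of the cited paper.
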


\section{Proof of Theorem \ref{them1}}
\begin{proof}[Proof of Theorem \ref{them1}]
	Assume to the contrary that $G$ contains no perfect matchings. By Theorem \ref{tutte}, one can suppose there exists a vertex set $S\in V(G)$ such that $o(G-S)-|S|>0$. Note that all components of $G-S$ are odd, otherwise we can delete one vertex in each even component and add these vertices to $S$, then the number of odd component and the order of $S$ have the same increase, so that $o(G-S)>|S|$ always holds and all components of $G-S$ are odd. For convenience, we denote $q=o(G-S)$ and $s=|S|$. Since $n$ is an even number, we have $q$ and $s$ have the same parity, and $q-s\geq2$.
	
	The joint of two graphs $G_1$ and $G_2$ is denoted by $G_1\vee G_2$, which is a graph such that $V(G_1\vee G_2)=V(G_1)\cup V(G_2)$ and $E(G_1\vee G_2)=E(G_1)\cup E(G_2)\cup \left\lbrace uv|~u\in V(G_1), v\in V(G_2) \right\rbrace $, and the disjoint of two graphs $G_1$ and $G_2$ is written as $G_1\cup G_2$, which is a graph such that $V(G_1\cup G_2)=V(G_1)\cup V(G_2)$ and $E(G_1\cup G_2)=E(G_1)\cup E(G_2)$. The complement of $G$ is denoted by $\overline{G}$. 
	
	Let $G_1,G_2,\cdots,G_q$ be the components of $G-S$ with $n_i=|G_i|$ ($i=1,2\cdots,q$) and $n_1\geq n_2\geq\cdots\geq n_q\geq1$. To find the feasible minimum distance signless Laplacian spectral radius, we first consider the graph $G'$, which is obtained from $G$ by joining $S$ and $G-S$ and by adding edges in $S$ and in  all components in $G-S$ so that $G_1,G_2,\cdots,G_q$ and $G[S]$ are cliques, i.e. $G'\cong K_s\vee(K_{n_1}\cup K_{n_2}\cup\cdots\cup K_{n_q})$. From Lemma \ref{largerrd}, it's easy to conclude that $\eta_1(G')\leq\eta_1(G)$.
	
	Next, let $n_1=n-s-q+1$ and $n_2=n_3=\cdots=n_q=1$, then we obtain a new graph $G''\cong K_s\vee K_{n-s+q-1}\vee \overline{K_{q-1}}$. Since $G'\cong G''$ if $n_1=1$, we only consider $n_1\geq3$. The quotient matrix of the distance signless Laplacian matrix $\mathcal{Q}(G'')$ according to the partition $\left\lbrace V(\overline{K_{q-1}}), V(K_{n-s-q+1}), V(K_s) \right\rbrace $ of $G''$, denoted by $\mathcal{B}$, equals
	\begin{equation*}
		\begin{bmatrix}
			2n-s+2q-6 & 2(n-s-q+1) & s \\
			2(q-1) & 2n-s-2 & s \\
			q-1 & n-s-q+1 & n+s-2
		\end{bmatrix}.
	\end{equation*}

	Let the vector \textbf{X} be an eigenvector of $\mathcal{Q}(G'')$ corresponding to $\eta_1(G'')$ and $x(v)$ be the entry of $\textbf{X}$ corresponding to vertex $v\in V(G'')$. By symmetry, all vertices in $V(\overline{K_{q-1}})$ (resp. $V(K_{n-s-q+1})$ or $V(K_s)$) have the same entries in $\textbf{X}$. Hence,
	we denote $x(v_1)=x_1$ for $v_1\in V(\overline{K_{q-1}})$, $x(v_2)=x_2$ for $v_2\in V(K_{n-s-q+1})$ and $x(v_3)=x_3$ for $v_3\in V(K_S)$. Note that the partition $\left\lbrace V(\overline{K_{q-1}}), V(K_{n-s-q+1}), V(K_s) \right\rbrace $ is equitable, we get $\eta_1(G'')(x_1,x_2,x_3)^t=B_1(x_1,x_2,x_3)^t$, i.e.,
\begin{equation*}
	\begin{cases}
		\eta_1(G'')x_1 = sx_3+2(n-s-q+1)x_2+(2n-s+2q-6)x_1,\\
		\eta_1(G'')x_2 = sx_3+(2n-s-2)x_2+2(q-1)x_1, \\
		\eta_1(G'')x_3 = (n+s-2)x_3+(n-s-q+1)x_2+(q-1)x_1.
	\end{cases}
\end{equation*}

By calculation, we have
\begin{equation*}
	x_1+x_2 = 2\left( \frac{\eta_1(G'')+4-n-q}{\eta_1(G'')+4+s-2n}\right)x_2.
\end{equation*}

Then by Lemma \ref{symm}, one can see
\begin{align*}
	&~~~~\eta_1(G')-\eta_1(G'')\\
	&\geq X^t(\mathcal{Q}(G')-\mathcal{Q}(G''))X\\
	&=2n_1\sum_{i=2}^{q}(n_i-1)x^2_2+\sum_{i=2}^{q}(n_i-1)\left[-x^2_1-2x_1x_2+(n-s-n_i-q+1)x^2_2+(n-s-n_i-q+2)x^2_2\right]\\
	&=\sum_{i=2}^{q}(n_i-1)\left[ 2(n-s+n_1-n_i-q+2)x^2_2-(x_1+x_2)^2\right]\\
	&=2\sum_{i=2}^{q}(n_i-1)\left[(n-s+n_1-n_i-q+2)-2\left( \frac{\eta_1(G'')+4-n-q}{\eta_1(G'')+4+s-2n}\right)^2\right]x_2^2.
\end{align*}
	
	Note that $n_1>n_2\geq n_2\geq n_3\cdots\geq n_q>1$ and $n_1-n_2\geq2$. To prove the inequality $\eta_1(G')-\eta_1(G'')>0$, we only prove $(n-s+n_1-n_2-q+2)-2\left( \frac{\eta_1(G'')+4-n-q}{\eta_1(G'')+4+s-2n}\right)^2>(n-s-q+4)-2\left( \frac{\eta_1(G'')+4-n-q}{\eta_1(G'')+4+s-2n}\right)^2>0$. From the construction of $G''$ and Lemma \ref{symm}, one can check $\eta_1(G'')>n+q-4$ and $\eta_1(G'')>2n-s-4$, then we have
	\begin{align}
		&~~~(n-s-q+4)-2\left( \frac{\eta_1(G'')+4-n-q}{\eta_1(G'')+4+s-2n}\right)^2>0\nonumber\\
		&\Leftrightarrow 1+\frac{n-s-q}{\eta_1(G'')+4+s-2n} <\sqrt{\frac{n-s-q+4}{2}}\nonumber\\
		&\Leftrightarrow \eta_1(G'')>2n-s-4+\frac{n-s-q}{\sqrt{\frac{n-s-q+4}{2}}-1}\label{inequ1}.
	\end{align}

	For convenience, we denote $r=\frac{n-s-q}{\sqrt{\frac{n-s-q+4}{2}}-1}$, then the inequality \eqref{inequ1} becomes $\eta_1(G'')>2n-s-4+r$. Now, we consider the determinant of matrix $(2n-s-4+r)I-\mathcal{B}$, where $I$ is an identity matrix.  If $\det\left((2n-s-4+r)I-\mathcal{B}\right)<0$, we can conclude $\eta_1(G'')=\rho(\mathcal{B})>2n-s-4+r$. 
	
	By calculation, we have
	\begin{equation*}
		\begin{split}
			&~~~\det\left((2n-s-4+r)I-\mathcal{B}\right)\\
			&=\left|\begin{matrix}
				n-2s-2+r & -(n-s-q+1) & -(q-1) \\
				-s & r-2 & -2(q-1) \\
				-s & -2(n-s-q+1) & -2(q-1)+r
			\end{matrix}\right|\\
			&=r\left|\begin{matrix}
				n-2s-2+r & -(n-s-q+1) \\
				-s & r-2
			\end{matrix}\right|+2(q-1)\left|\begin{matrix}
			n-\frac{3}{2}s-2+r & -(n-s-q)+\frac{1}{2}r] \\
			0 & -2(n-s-q)-r
		\end{matrix}\right|\\
		&=r\left[(n-2s-2)(r-2)+r^2-2r-(n-s-q+1)s\right]-2(q-1)(n-\frac{3}{2}s-2+r)(2n-2s-2q+r)\\
		&=\left[(n-2s-4)r^2-2(q-1)r^2-4(q-1)(n-\frac{3}{2}s-2)(n-s-q)\right]-2(n-2s-2)r-rs\\
		&~~~~-2(q-1)(n-\frac{3}{2}s-2)r+\left[r^2-(n-s-q)s-4(q-1)(n-s-q)\right]r.
		\end{split}
	\end{equation*}

	Obviously, we only need to check whether inequalities $(n-2s-4)r^2-2(q-1)r^2-4(q-1)(n-\frac{3}{2}s-2)(n-s-q)<0$ and $r^2-(n-s-q)s-4(q-1)(n-s-q)<0$ hold. Note that $(n-2s-4)r^2-2(q-1)r^2-4(q-1)(n-\frac{3}{2}s-2)(n-s-q)$ is negative when $n-2s-2q-2\leq0$. If $n-2s-2q-2>0$, based on the above discussion, we have
	\begin{align}
		&~~~~(n-2s-4)r^2-2(q-1)r^2-4(q-1)(n-\frac{3}{2}s-2)(n-s-q)<0\label{inequ2}\\
		&\Leftrightarrow(n-2s-2q-2)\frac{(n-s-q)^2}{\frac{n-s-q+6}{2}-2\sqrt{\frac{n-s-q+4}{2}}}-2(q-1)(n-\frac{3}{2}s-2)\frac{(n-s-q)^2}{\frac{n-s-q}{2}}<0\nonumber\\
		&\Leftrightarrow\frac{n-s-q}{n-s-q+6-2\sqrt{2}\sqrt{n-s-q+4}}<\frac{2(q-1)\left(n-2s-2q-2+\frac{s}{2}+2q\right)}{n-2s-2q-2}\nonumber\\
		&\Leftrightarrow\frac{1}{1+\frac{6}{n-s-q}-2\sqrt{2}\sqrt{\frac{1}{n-s-q}+\frac{4}{(n-s-q)^2}}}<2(q-1)+\frac{\frac{s}{2}+2q}{n-2s-2q-2}\nonumber.
	\end{align}
	
	Since $\frac{1}{1+\frac{6}{n-s-q}-2\sqrt{2}\sqrt{\frac{1}{n-s-q}+\frac{4}{(n-s-q)^2}}}$ attains its maximum if $n-s-q=4$, one can easily check that $\frac{1}{1+\frac{6}{4}-2\sqrt{2}\sqrt{\frac{1}{4}+\frac{4}{(4)^2}}}=2<2(q-1)+\frac{\frac{s}{2}+2q}{n-2s-2q-2}$, then the inequality \eqref{inequ2} holds.
	
	Analogously, it follows that
	\begin{align}
		&~~~~r^2-(n-s-q)s-4(q-1)(n-s-q)<0\label{inequ3}\\
		&\Leftrightarrow\frac{(n-s-q)^2}{\frac{n-s-q+6}{2}-2\sqrt{\frac{n-s-q+4}{2}}}-(2q+\frac{s}{2}-2)\frac{(n-s-q)^2}{\frac{n-s-q}{2}}<0\nonumber\\
		&\Leftrightarrow\frac{n-s-q}{n-s-q+6-2\sqrt{2}\sqrt{n-s-q+4}}<2q+\frac{s}{2}-2\nonumber\\
		&\Leftrightarrow\frac{1}{1+\frac{6}{n-s-q}-2\sqrt{2}\sqrt{\frac{1}{n-s-q}+\frac{4}{(n-s-q)^2}}}<2q+\frac{s}{2}-2\nonumber.
	\end{align}
	
	Since $\frac{1}{1+\frac{6}{n-s-q}-2\sqrt{2}\sqrt{\frac{1}{n-s-q}+\frac{4}{(n-s-q)^2}}}\leq2<2q+\frac{s}{2}-2$, we obtain the inequality \eqref{inequ3} holds. On the whole, we have prove that $\eta_1(G'')>2n-s-4+\frac{n-s-q}{\sqrt{\frac{n-s-q+4}{2}}-1}$, which implies that $(n-s+n_1-n_2-q+2)-2\left( \frac{\eta_1(G'')+4-n-q}{\eta_1(G'')+4+s-2n}\right)^2>(n-s-q+4)-2\left( \frac{\eta_1(G'')+4-n-q}{\eta_1(G'')+4+s-2n}\right)^2>0$. Therefore, for $n_1\geq3$, $\eta_1(G'')<\eta_1(G')$.

	Now, we suppose $q=s+2$ and get a new graph $G'''\cong K_s\vee K_{n-2s-1}\vee \overline{K_{s+1}}$. Then we consider the difference between $\eta_1(G'')$ and $\eta_1(G''')$. Since $G''\cong G'''$ if $q=s+2$, we let $q\geq s+4$. Similarly, we let the vector \textbf{Y} be an eigenvector of $\mathcal{Q}(G''')$ corresponding to $\eta_1(G''')$ and $y(v)$ be the entry of $\textbf{Y}$ corresponding to vertex $v\in V(G''')$. Suppose that $y(v_1)=y_1$ for $v_1\in V(\overline{K_{s+1}})$, $y(v_2)=y_2$ for $v_2\in V(K_{n-2s-1})$ and $y(v_3)=y_3$ for $v_3\in V(K_S)$. Combining Lemma \ref{symm} yields
	\begin{align*}
		&~~~~\eta_1(G'')-\eta_1(G''')\\
		&\geq Y^t(\mathcal{Q}(G'')-\mathcal{Q}(G'''))Y\\
		&=3(q-s-2)(n-s-q+1)y_2^2+(q-s-3)(q-s-2)y_2^2+(n-2s-2)(q-s-2)y_2^2\\
		&>0,
	\end{align*}
	which implies that $\eta_1(G''')<\eta_1(G'')$ if $q\geq s+4$.

	Let $G''''=K_1\vee K_{n-3}\vee\overline{K_{2}}$. In the remainder of this section, we will clarify that, in most cases, the graph $G''''$ has the minimum distance signless Laplacian spectral radius and contains no perfect matchings.
	 The quotient matrix of distance signless Laplacian matrix $\mathcal{Q}(G''')$ according to the partition $\left\lbrace V(K_s), V(K_{n-2s-1}), V(\overline{K_{s+1}}) \right\rbrace $ can be expressed as
	 \begin{equation*}
	 	\begin{bmatrix}
	 		n+s-2 & n-2s-1 & s+1 \\
	 		s & 2n-s-2 & 2(s+1) \\
	 		s & 2(n-2s-1) & 2n+s-2
	 	\end{bmatrix}.
	 \end{equation*}
 
 	The corresponding characteristic polynomial equals
 	\begin{equation*}
 		\begin{split}
 			f(x)&=x^3+(6-s-5n)x^2+(8n^2-ns-24n+8s^2+8s+16)x-4n^3+2n^2s+20n^2\\
 			&~~~~-8ns^2-14ns-32n-2s^3+14s^2+20s+16.
 		\end{split}
 	\end{equation*}
 
  Note that $G'''\cong G''''$ if $s=1$. Let $s=1$, the polynomial $f(x)$ becomes
  $\tilde{f}(x)=x^3+(5-5n)x^2+(8n^2-25n+32)x-4n^3+22n^2-54n+48$. The largest root of the equation $\tilde{f}(x)=0$ is denoted by $\theta(n)$. By calculation, we can get the explicit formula of $\theta(n)$, that is 
  
  \begin{equation*}
  	\small
  	\begin{split}
  		\theta(n)&=\frac{1}{3}\left( \frac{2}{ 141n^2-357n-2n^3-106+3\sqrt{3}\sqrt{-32n^5+543n^4-4450n^3+21095n^2-53208n+53440}}\right)^{\frac{1}{3}} n^2\\
  		&~~~+\left[ \frac{25}{3}\left( \frac{2}{ 141n^2-357n-2n^3-106+3\sqrt{3}\sqrt{-32n^5+543n^4-4450n^3+21095n^2-53208n+53440}}\right)^{\frac{1}{3}}+\frac{5}{3}\right] n\\
  		&~~~+\frac{1}{3}\left( \frac{141n^2-357n-2n^3-106+3\sqrt{3}\sqrt{-32n^5+543n^4-4450n^3+21095n^2-53208n+53440}}{2}\right)^{\frac{1}{3}}\\
  		&~~~-\frac{71}{3}\left( \frac{2}{ 141n^2-357n-2n^3-106+3\sqrt{3}\sqrt{-32n^5+543n^4-4450n^3+21095n^2-53208n+53440}}\right)^{\frac{1}{3}}-\frac{5}{3}.
  	\end{split}
  \end{equation*}
  
  By plugging the value $\theta(n)$ into $x$ of $f(x)$, we have
  \begin{align*}
  	f(\theta(n))&=\tilde{f}(\theta(n))+(1-s)x^2+(n-ns+8s^2+8s-16)x+2n^2s-2n^2-8ns^2\\
  	&~~~~-14ns+22n-2s^3+14s^2+20s-32\\
  	&=(1-s)\left[(\theta(n))^2+n\cdot\theta(n)-8(s+2)\cdot\theta(n)-2n^2+2n(4s+11)+2s(s-6)-32\right].
  \end{align*}
	
	Now, we suppose $s\geq2$ and $n\geq 2s+4$. By Lemma \ref{largerrd}, one can see $\theta(n)=\eta_1(G'''')>2n+s-2>2n>\frac{16+8s-n}{2}$. Hence
	\begin{align*}
		&~~~~(\theta(n))^2+(n-8s-16)\theta(n)-2n^2+2n(4s+11)+2s(s-6)-32\\
		&>4n^2+2n^2-16ns-32n-2n^2+8ns+22n+2s^2-12s-32\\
		&=4n^2-8ns-10n+2s^2-12s-32\\
		&\geq 4(2s+4)^2-8(2s+4)s-10(2s+4)+2s^2-12s-32\\
		&=2s^2-8\geq0,
	\end{align*}
	which implies that for $s\geq2$ and $n\geq2s+4$, $f(\theta(n))<0$, i.e. $\theta(n)=\eta_1(G'''')<\eta_1(G''')$.
	
	If $n=2s+2$, one can check that $G'''\cong K_s\vee K_1\vee\overline{K_{s+1}}$ contains a perfect matching, a contradiction. Let $G'''''\cong K_s\vee\overline{K_{s+2}}$ and the quotient matrix of distance signless Laplacian matrix $\mathcal{Q}(G''''')$ according to the  partition $\left\lbrace K_s, \overline{K_{s+2}} \right\rbrace $ equals
	\begin{equation*}
		\begin{bmatrix}
			n+s-2 & s+2\\
			s & 5s+4 
		\end{bmatrix}.
	\end{equation*}

	The corresponding characteristic polynomial equals
	\begin{equation*}
		g(x)=x^2-(n+6s+2)x+4n-8s+5ns+4s^2-8.
	\end{equation*}

	Easily, we obtain $2n+\sqrt{\frac{n(n+2)}{2}}-2$ is the largest root of the equation $g(x)=0$. 
	
	Finally, we calculate that $\theta(4)=6+2\sqrt{3}$,  $\theta(6)\approx15.4597>10+2\sqrt{6}$, $\theta(8)\approx20.8655>14+2\sqrt{10}$,  $\theta(10)\approx26.0148>18+2\sqrt{15}$, and $\theta(n)<2n+\sqrt{\frac{n(n+2)}{2}}-2$ for $n\geq12$.
	
	This completes the proof.
\end{proof}

\section{Proof of Theorem \ref{them2}}
\begin{proof}[Proof of Theorem \ref{them2}] Assume to the contrary that $G$ contains no perfect matchings. Let $G=(X,Y)$ be a connect balanced bipartite graph with $2n$ vertices. Note that $V(G)=X\cup Y$, $X\cap Y=\varnothing$, and $|X|=|Y|=n$. From Theorem \ref{Hall}, one can see there exists a subset $S\subseteq X$ such that $|N(S)|<|S|$. For convenience, we denote $s=|S|$ and $k=|N(S)|$, and then we have $1\leq k<s\leq n-1$. To find the connection between distance spectral radius and perfect matchings, Zhang and Lin \cite{zhang} constructed a new connected balanced bipartite graph, denoted by $\Gamma_{s,k}$ in here, which is obtained from $G$ by joining $S$ and $N(S)$, $X-S$ and $Y-N(S)$, and by adding all edges between $N(S)$ and $X-S$, i.e. $\Gamma_{s,k}\cong K_{n,n}-e(S,Y-N(S))$. By Lemma \ref{largerrd}, it is easy to see $\eta_1(\Gamma_{s,k})\leq\eta_1(G)$.
	
First we consider the difference between $\eta_1(\Gamma_{s,k})$ and $\eta_1(\Gamma_{s,s-1})$. Since $\Gamma_{s,k}\cong\Gamma_{s,s-1}$ if $k=s-1$, we let $1\leq k \leq s-2$. By the Perron-Frobenius theorem, we suppose that the postive vector $\textbf{Z}$ is an eigenvector of $\mathcal{Q}(\Gamma_{s,s-1})$ corresponding to $\eta_1(\Gamma_{s,s-1})$ and $z(v)$ is the entry of $\textbf{Z}$ corresponding to vertex $v\in V(\Gamma_{s,s-1})$. Analogously, we denote $z(v_1)=z_1$ for $v_1\in S$, $z(v_2)=z_2$ for $v_2\in X-S$, $z(v_3)=z_3$ for $v_3\in N(S)$ and $z(v_4)=z_4$ for $v_4\in Y-N(S)$. From Lemma \ref{symm}, we have
	\begin{align*}
		\eta_1(\Gamma_{s,k})-\eta_1(\Gamma_{s,s-1})&\geq Z^t(\mathcal{Q}(\Gamma_{s,k})-\mathcal{Q}(\Gamma_{s,s-1}))Z\\
		&=2s(s-k-1)z_1^2+4s(s-k-1)z_1z_3>0.
	\end{align*}
So, for $1\leq k \leq s-2$, $\eta_1(\Gamma_{s,s-1})<\eta_1(\Gamma_{s,k})$.

Next, we shall show that the balanced bipartite graph $\Gamma_{n-1,n-2}$ has the minimum distance signless Laplacian spectral radius and contains no perfect matchings. Note that $\Gamma_{s,s-1}\cong\Gamma_{n-1,n-2}$ when $s=n-1$. In the later, we only consider $2\leq s\leq n-2$. For the bipartite graph $\Gamma_{s,s-1}$, the quotient matrix of the distance signless Laplacian matrix $\mathcal{Q}(\Gamma_{s,s-1})$ according to the partition $\left\lbrace S, X-S, N(S), Y-N(S) \right\rbrace $ has the form
\begin{equation*}
	\begin{bmatrix}
		7n-2s-2 & 2n-2s & s-1 & 3n-3s+3 \\
		2s & 5n-2s-4 & s-1 & n-s+1 \\
		s & n-s & 3n+2s-6 & 2n-2s+3 \\
		3s & n-s & 2s-2 & 5n-2
	\end{bmatrix}.
\end{equation*}
The corresponding characteristic polynomial equals
\begin{align*}
	h(x)&=x^4+(2s-20n+14)x^3+(145n^2-38ns-214n+12s^2+10s+74)x^2\\
	&~~~~-(450n^3-190n^2s-1052n^2+82ns^2+189ns+775n-78s^2+2s-172)x\\
	&~~~~+504n^4-282n^3s-1656n^3+150n^2s^2+517n^2s+1951n^2-24ns^3-226ns^2\\
	&~~~~-220ns-938n+12s^4-24s^3+138s^2-46s+144.
\end{align*}

As for the bipartite graph $\Gamma_{n-1,n-2}$, the quotient matrix of the distance signless Laplacian matrix $\mathcal{Q}(\Gamma_{n-1,n-2})$ according to the partition $\left\lbrace S, X-S, N(S), Y-N(S) \right\rbrace $ equals
\begin{equation*}
	\begin{bmatrix}
		5n & 2 & n-2 & 6 \\
		2n-2 & 3n-2 & n-2 & 2 \\
		n-1 & 1 & 5n-8 & 5 \\
		3n-3 & 1 & 2n-4 & 5n-2
	\end{bmatrix}.
\end{equation*}

The corresponding characteristic polynomial equals
\begin{align*}
	\tilde{h}(x)&=x^4+(12-18n)x^3+(119n^2-190n+76)x^2-(342n^3-915n^2+826n-252)x\\
	&~~~~+360n^4-1383n^3+2026n^2-1362n+364.
\end{align*}

Let $\kappa(n)$ be the largest root of the equation $\tilde{h}(x)=0$. Plugging $\kappa(n)$ into $h(x)$ yields
\begin{align*}
	h(\kappa(n))&=\tilde{h}(\kappa(n))+(2s-2n-2)(\kappa(n))^3+(26n^2-38ns-24n+12s^2+10s-2)(\kappa(n))^2\\
	&~~~~-\left(108n^3-109n^2s+137n^2-82ns^2-189ns+51n+78s^2-2s-80\right)\kappa(n)+144n^4\\
	&~~~~-282n^3s-273n^3+150n^2s^2+517n^2s-75n^2-24ns^3-226ns^2-220ns+424n\\
	&~~~~+12s^4-24s^3+138s^2-46s-220\\
	&=(s-n+1)[2(\kappa(n))^3+(12s-26n-2)(\kappa(n))^2+(78s-29n-82ns+108n^2-80)\kappa(n)\\
	&~~~~-144n^3+138n^2s+129n^2-12ns^2-250ns+204n+12s^3-36s^2+174s-220 ]. 
\end{align*}

Denote $\varphi(n,s)=2(\kappa(n))^3+(12s-26n-2)(\kappa(n))^2+(78s-29n-82ns+108n^2-80)\kappa(n)-144n^3+138n^2s+129n^2-12ns^2-250ns+204n+12s^3-36s^2+174s-220$. It follows that
\begin{equation*}
	\frac{\partial \varphi(n,s)}{\partial s}=12(\kappa(n))^2+(78-82n)\kappa(n)+138n^2-24ns-250+36s^2-72s+174.
\end{equation*}

By Lemma \ref{symm}, we have
\begin{equation*}
	\kappa(n)=\eta_1(\Gamma_{n,n-1})=\max_{\textbf{x}\in \mathbb{R}^n}\frac{\textbf{x}^t\mathcal{Q}(\Gamma_{n,n-1})\textbf{x}}{\textbf{x}^{t}\textbf{x}}\geq\frac{\textbf{1}^t\mathcal{Q}(\Gamma_{n,n-1})\textbf{1}}{\textbf{1}^t\textbf{1}}=\frac{12n^2+8n-8}{2n}=6n+4-\frac{4}{n}>6n.
\end{equation*}

Since $\kappa(n)>6n>\frac{82n-78}{24}$, then we have
\begin{align*}
	&~~~~12(\kappa(n))^2+(78-82n)\kappa(n)+138n^2-24ns-250+36s^2-72s+174\\
	&>12\cdot(6n)^2+(78-82n)\cdot6n+138n^2-24ns-250+36s^2-72s+174\\
	&=78n^2+(218-24s)n+36s^2-72s+174
\end{align*}

Let $\psi(n,s)=78n^2+(218-24s)n+36s^2-72s+174$.  Since
\begin{equation*}
\frac{\partial \psi(n,s)}{\partial s} = 72s-72-24n = 0\Leftrightarrow s = \frac{n}{3}+1,
\end{equation*}
 one can see the function $\psi(n,s)$ attains its minimum if $s=\frac{n}{3}+1$, then we have $\psi(n,s)\geq\psi(n,\frac{n}{3}+1)=74n^2+194n+138>0$, which implies that $\frac{\partial \varphi(n,s)}{\partial s}>\psi(n,s)>0$. Hence, $\varphi(n,s)\geq \varphi(n,1)=2(\kappa(n))^3+(10-26n)(\kappa(n))^2+(108n^2-111n-2)\kappa(n)-144n^3+267n^2-58n-70$.

By program, we calculate that the function $\varphi(n,1)$ monotonically increasing for $n\geq3$, and 
\begin{align*}
	\varphi(n,1)&=2(\kappa(n))^3+(10-26n)(\kappa(n))^2+(108n^2-111n-2)\kappa(n)-144n^3+267n^2-58n-70\\
	&\geq2(\kappa(3))^3-68(\kappa(3))^2+637\kappa(3)-1729\approx414.17>0.
\end{align*}

Since $s-n+1<0$, we have $h(\kappa(n))=(s-n+1)\cdot \varphi(n,s)<(s-n+1)\cdot \varphi(n,1)<0$, implying $\eta_1(\Gamma_{n-1,n-2})<\eta_1(\Gamma_{s,s-1})$.

This completes the proof.

\end{proof}
\section*{Declaration of competing interest}
The authors declare that they have no known competing financial interests or personal relationships that could have appeared to influence the work reported in this paper.

\section*{Acknowledgments}\setlength{\baselineskip}{15pt}
 The authors would like to express their sincere gratitude to the referees for their careful reading and insightful suggestions.

\end{document}